\newtheorem{theorem}{Theorem}[section]
\newtheorem{corollary}{Corollary}[section]
\newtheorem{definition}{Definition}[section]
\newtheorem{example}{Example}[section]
\newtheorem{lemma}{Lemma}[section]
\newtheorem{proposition}{Proposition}[section]
\newtheorem{remark}{Remark}[section]
\newenvironment{proof}[1][Proof]{\noindent\textbf{#1.} }{\ \rule{0.5em}{0.5em}}
\begin{document}
\begin{center}
\textbf{COMPLETENESS PROPERTIES OF TRANSITIVE BINARY RELATIONAL SETS}
\end{center}

\begin{center}
 O. R. Sayed  \\
Department of Mathematics, Faculty of Science, Assiut University,
Assiut 71516, EGYPT \\
 o\_sayed@aun.edu.eg, o\_r\_sayed@yahoo.com
\end{center}

\begin{center}
N. H. Sayed\\
Department of Mathematics, Faculty of  Science,  New Valley University, EGYPT\\
 nabil.gawed@newvedu.au.edu.eg , nhs\_guweid@yahoo.com.
\end{center}

\footnotetext{\textit{Corresponding author}: O. R. Sayed.
\par
\textit{Keywords and Phrases}: Bounded complete poset; Bounded complete domain; Finitely complete poset; Finitarily complete poset; Strongly compactly complete domain.
\par
\textit{2010 Mathematics Subject Classification}: 06A06, 06F30, 54E35, 54F05}

\begin{abstract}
 The present paper is devoted to study some completeness properties of transitive binary relational set, i.e., a set together with a transitive binary relation (so called t-set).
\end{abstract}

\section{Introduction}

\noindent Abramsky and Jung [1] introduced a method to construct a canonical partially ordered set from a pre-ordered set and said: "Many notions from the theory of order sets make sense even if reflexivity fails". Finally, they sum up these considerations with the slogan: "Order theory is the study of transitive relations". Heckmann [3] introduced and studied the concepts of bounded complete poset, bounded complete domain, finitely complete poset, complete domain, finitarily complete poset, strongly compactly complete domain and compactly complete domain.

This paper is organized as follows: In Section 2, some definitions and results concerning some completeness properties of poset and domain were presented. In Section 3, bounded complete t-sets and bounded complete domain t-sets were introduced and studied. In Section 4, finitely complete t-sets and complete domain t-sets were introduced and studied. In Section 5, we extend the concept of finitary sets in transitive binary relational sets and then introduced and study the concept of finitarily complete t-sets. Finally, in Section 6, strongly compactly complete t-sets and compactly complete t-sets were introduced and studied.

\section{Preliminaries}

\noindent For basic concepts of the poset, we refer to [5]. The concepts of directed subset, domain (directed complete poset), upper cone, upper closure etc., the reader is referred to [1, 3].

\begin{definition}
Let \textit{X} be a nonempty set with a binary relation "$\preccurlyeq $" on \textit{X}.
 The pair $(X, \preccurlyeq)$ is called:

\begin{enumerate}
\item[(1)] a partially ordered set (poset for short) [5] if "$\preccurlyeq $" is reflexive, antisymmetric and transitive;

\item[(2)] a pre-ordered (quasi ordered) set [5] if "$\preccurlyeq $" is reflexive and transitive;

\item[(3)] an equivalence set [5] if "$\preccurlyeq $" is reflexive, symmetric and transitive;

\item[(4)] a continuous information system [4, 6] if "$\preccurlyeq $" is transitive and interpolative (if $\forall x,z\in X$ with $x\preccurlyeq z $ there exists $y\in X$ such that $x\preccurlyeq y\preccurlyeq z)$;

\item[(5)] an abstract base [7] if "$\preccurlyeq $"  is transitive and for every $x\in X$ and every finite subset \textit{A} of \textit{X}, if for every $y\in A, y\preccurlyeq x$,  there exists $z\in X$ such that $y\preccurlyeq z\preccurlyeq x$.
\end{enumerate}

\end{definition}

\begin{definition}
[3]. Let $(X,\preccurlyeq )$ be a domain. A subset $A$ of \textit{X} is called strongly compact if for every $O\in {\tau }_S$ with $A\subseteq O$ there exists a finitary set \textit{F} with $A\subseteq F\subseteq O$, where  ${\tau }_S$ is the Scott-topology on \textit{X}.
\end{definition}

\section{Bounded complete t-sets and bounded complete domain t-sets}

\begin{definition}
A transitive binary relational set (t-set for short) is a pair $(X,\preccurlyeq )$, where \textit{X} is a non-empty set and "$\preccurlyeq$ " is a transitive binary relation on \textit{X}.
\end{definition}

\begin{example}
Partially ordered sets, pre-ordered sets, equivalence sets, continuous information system and abstract bases are t-sets.
\end{example}

\begin{remark}
One can deduce that any abstract base is a continuous information system (Indeed, let $x\in X$ and $\{y\}$ be a finite subset of \textit{X}, where $y\in X$. If $y\preccurlyeq x$, then there exists $z\in X$ such that $y\preccurlyeq z\preccurlyeq x$. Hence  $"\preccurlyeq "$ is interpolative.) but the converse need not be true as we illustrate by the following example:
\end{remark}

\begin{example}
Let $X=\{a,b,x\}$ and $\preccurlyeq =\{(a, a), (b, b), (a, x), (b, x)\}$. Then $"\preccurlyeq "$  is transitive and interpolative. Furthermore, if $A=\{a, b\}$, then $(X,\preccurlyeq )$ is not an abstract base.
\end{example}

\begin{remark}
Every pre-ordered set is an abstract base (Indeed, suppose that $A$ is a finite subset of a pre-ordered set $X$ and such that for every $x\in X$ and for every $ y\in A$, $ y\preccurlyeq x$. Then $y\preccurlyeq x\preccurlyeq x$.). The converse need not be true as we illustrate by the following example:
\end{remark}

\begin{example}
Let $X=\{a,b,c,d,e\}$ and $\preccurlyeq  =\{(a,a)\}$. Then $(X,\preccurlyeq )$ is an abstract base. It is clear that $"\preccurlyeq "$ is not reflexive. Hence $(X,\preccurlyeq )$ is not a pre-ordered set.
\end{example}

\begin{definition}
  Let $(X,\preccurlyeq )$ be a t-set and $A\subseteq X$.
\begin{enumerate}

\item[(1)]  The lower (resp. upper) bounded subset in \textit{X} of \textit{A} is denoted
 by $lb(A)$ (resp.$\ ub(A)$) and defined as follows:\\
 $lb(A)=\{x\in X:\forall y\in A, x\preccurlyeq y\}$ (resp.$ub(A)=\{x\in X:\forall y\in A,y\preccurlyeq x\})$.

 Each element in $lb(A)$ (resp. $ub\ (A)$) is called a lower (resp. an upper) bound of \textit{A}.
\item[(2)]  The subset of least (resp. largest) elements of a subset \textit{A} is denoted by $le(A)$ (resp. $la(A)$) and defined as follows:

$le(A)=\{x\in A: \forall  y\in A, x\preccurlyeq y\}$ (resp. $la(A)=\{x\in A: \forall  y\in A,
 y\preccurlyeq x\}).$
Each element in $le(A)$ (resp. $la(A))$ is called a least (resp. a largest) element of \textit{A}.

\item[(3)] The infimum (resp. supremum) subset in \textit{X} of \textit{A} is denoted by $\inf  (A)$ (resp. $\sup  (A)$) and defined as follows:

 $\inf  (A)=la(lb(A))$ (resp. $\sup  (A)=le(ub(A))$).

 Each element in $\inf  (A)$ (resp. $\sup  (A)$) is called an infimum (resp. a supremum) of\textit{ A}.

\item[(4)] The lower (resp. upper) closure in \textit{X} of \textit{A} is denoted by $\downarrow (A)$ (resp. $\uparrow (A)$) and defined as follows:

 $\downarrow (A)=\{x\in X:$ there exists$ y\in A,x\preccurlyeq y\}$ (resp.$\uparrow  (A)=\{x\in X:$ there exists $ y\in A, y\preccurlyeq x\})$.
\end{enumerate}
\end{definition}

\begin{definition}
 Let $(X,\preccurlyeq )$ be a t-set and $A\subseteq X$. Then \textit{A} is called:

\begin{enumerate}
\item[(1)] a directed subset if  $A \neq \phi$ and for every distinct elements  \textit{x, y} in \textit{A}, there exists $ z\in A\cap ub(\{x\ , y\})$;
\item[(2)] an upper cone if there exists $x\in A$ such that $A=\uparrow x$.
\end{enumerate}
\end{definition}

\begin{definition}
A t-set $(X,\preccurlyeq )$ is called bounded complete if \textit{X} is an upper cone and for every
$x, y\in X, ub(\{x,y\})$ is empty or an upper cone.
\end{definition}

\begin{theorem}
For a t-set $(X,\preccurlyeq )$, the following statements are equivalent:
\begin{enumerate}
\item[(1)]  \textit{X} is a bounded complete t-set;

\item[(2)] $le(X)\neq \phi$ and for every $x, y\in X$ with $ub(\{x, y\})\neq \phi$,
${\sup  (\{x, y\})\neq \phi }$ ;

\item[(3)] If \textit{A} is a finite bounded subset from above, then $\sup  (A) \neq \phi$;

\item[(4)]  If \textit{A} is a finite subset of \textit{X}, then $ub(A)$ is either $\phi$ or an upper cone.
\end{enumerate}
\end{theorem}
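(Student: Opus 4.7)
The plan is a cyclic chain of implications $(1)\Rightarrow(2)\Rightarrow(3)\Rightarrow(4)\Rightarrow(1)$, which is cleaner than proving all six pairwise equivalences.

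For $(1)\Rightarrow(2)$: if $X=\uparrow x_{0}$, then $x_{0}\preccurlyeq z$ for every $z\in X$; taking $z=x_{0}$ shows $x_{0}\in X$ itself, so $x_{0}\in le(X)$. Similarly, if $ub(\{x,y\})=\uparrow z_{0}\neq \phi$, then $z_{0}\preccurlyeq z_{0}$ gives $z_{0}\in ub(\{x,y\})$, and every $w\in ub(\{x,y\})$ satisfies $z_{0}\preccurlyeq w$, so $z_{0}\in le(ub(\{x,y\}))=\sup(\{x,y\})$.

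For $(2)\Rightarrow(3)$, which I expect to be the main work, I proceed by induction on $|A|$. The case $A=\phi$ is immediate since $ub(\phi)=X$ and $\sup(\phi)=le(X)\neq\phi$ by (2). The case $|A|=1$ follows from (2) applied with $x=y$. For the step, write $A=A'\cup\{x_{n}\}$ and fix $z\in ub(A)\neq\phi$. The induction hypothesis gives $s'\in\sup(A')$, and since $s'\preccurlyeq z$ and $x_{n}\preccurlyeq z$ we have $z\in ub(\{s',x_{n}\})$, so (2) produces $s\in\sup(\{s',x_{n}\})$. Transitivity then shows $s\in ub(A)$: for each $a\in A'$, $a\preccurlyeq s'\preccurlyeq s$, and $x_{n}\preccurlyeq s$ directly. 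For the least-upper-bound part, any $w\in ub(A)$ satisfies $s'\preccurlyeq w$ by the definition of $\sup(A')$, hence $w\in ub(\{s',x_{n}\})$, and so $s\preccurlyeq w$ since $s\in\sup(\{s',x_{n}\})$.

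For $(3)\Rightarrow(4)$: given a finite $A$ with $ub(A)\neq\phi$, pick $s\in\sup(A)$ and verify $ub(A)=\uparrow s$. The inclusion $ub(A)\subseteq\uparrow s$ is the least-element condition on $s$; the reverse uses transitivity: if $s\preccurlyeq w$, then for each $a\in A$, $a\preccurlyeq s\preccurlyeq w$, so $w\in ub(A)$.

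Finally, $(4)\Rightarrow(1)$ is obtained in two pieces. Applying (4) to the empty subset gives that $ub(\phi)=X$ is an upper cone (it is nonempty because $X$ is), so $X=\uparrow x_{0}$ for some $x_{0}$. Applying (4) to two-element subsets recovers the pair condition in Definition 3.4.

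The main obstacle is the inductive step in $(2)\Rightarrow(3)$: one must both choose the right decomposition $A=A'\cup\{x_{n}\}$ and be careful that $\sup$ is defined set-theoretically (possibly with several witnesses in the non-antisymmetric setting), so that appeals to $\sup(A')$ and $\sup(\{s',x_{n}\})$ only assert nonemptiness and rely purely on transitivity to chain the inequalities. Everything else reduces to manipulating the definitions of $ub$, $le$, $\sup$, and $\uparrow$.
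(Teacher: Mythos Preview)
Your proposal is correct and follows essentially the same cyclic route $(1)\Rightarrow(2)\Rightarrow(3)\Rightarrow(4)\Rightarrow(1)$ as the paper, with the same inductive/iterative reduction to pairwise suprema in $(2)\Rightarrow(3)$. If anything, your write-up is more careful than the paper's in verifying both inclusions $ub(A)=\uparrow s$ in $(3)\Rightarrow(4)$ and in making the least-upper-bound check explicit in the inductive step; the only cosmetic issue is the slightly garbled sentence ``taking $z=x_{0}$ shows $x_{0}\in X$ itself'' in $(1)\Rightarrow(2)$, where what you actually use is that the definition of upper cone already places $x_{0}\in X$, hence $x_{0}\in\uparrow x_{0}$ gives $x_{0}\preccurlyeq x_{0}$.
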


\begin{proof}
 $(1) \Rightarrow (2)$ : Since \textit{X} is an upper cone, then there exists $a\in X$ such that  $\uparrow a=X$.  So, $\{a\}\subseteq le(X)$. If $ub(\{x, y\})\neq \phi$, then $ub(\{x, y\})$ is an upper cone. Hence there exists $b\in ub(\{x, y\})$ such that $\uparrow b = ub(\{x,y\})$. So, $b\in le(ub(\{x, y\}))$. Therefore
  $\sup  (\{x, y\})\neq \phi$.

 $(2) \Rightarrow (3)$ : Now $\phi$ is a finite bounded set from above since $ub(\phi)=X\neq \phi$. Since
 $le(X) = le(ub(\phi))\neq \phi$, then $\sup  (\phi)\neq \phi$. Let \textit{A} be a non-empty finite bounded subset from above. If $A=\{z\}$ and $ub(\{z\}) \neq \phi$, then $\sup  (A)\neq \phi$. Suppose
 $A=\{x_1, x_2, x_3,..., x_n\}$ and $ub(A)\neq \phi$. Now $A_{ 1, 2} = \{x_1, x_2\}$ and
 $ub(A_{1, 2})\neq \phi$, then $\sup  (A_{1, 2})\neq \phi$. Take $u_{1,2}\in \sup  (A_{1, 2})$ and
 consider $A_{ 1,2,3} = \{u_{1, 2}, x_3\}$. Then $\sup  (A_{1, 2, 3}) \neq \phi$ because $ub(A_{1,
 2, 3})\neq \phi$. We can proceed until consider the set $B=\{u_{1, 2, ... ,n-1}, x_n\}$. Since
 $ub(B)\neq \phi$, then $\sup  (B) \neq \phi$. Now, for every $l\in \sup  (B)$, $l\in ub(A)$. Since $m\in
 ub(A)$, one can deduce that $l\preccurlyeq m$. Then $l\in \sup  (A)$. So, $\sup  (A) \neq \phi$.

 $(3) \Rightarrow (4)$ : Let \textit{A} be a finite subset of \textit{X}. If \textit{A} is not bounded from above, then $ub(A)=\phi$. Let \textit{A} be finite bounded subset from above. Then $\sup  (A) = le(ub(A))\neq \phi$. Then there exists $x\in ub(A)$ such that  $\uparrow x = ub(A)$.

 $(4) \Rightarrow (1)$ : Now, $X = ub(\phi)$ and so \textit{X} is an upper cone. Since for every $x, y\in X$,
 $\{x,  y\}$ is finite. Then $ub(\{x,y\})=\phi$ or $ub(\{x,y\})$ is an upper cone.
\end{proof}

\begin{lemma}
For a t-set $(X,\preccurlyeq )$, the following statements are equivalent:

\begin{enumerate}
\item[(1)] If \textit{A} is bounded subset from above, then $\sup  (A) \neq \phi$ ;
\item[(2)] If \textit{A} is a non-empty subset of \textit{X}, then $\inf  (A) \neq \phi$.
\end{enumerate}
\end{lemma}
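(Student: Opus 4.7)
The plan is to exploit the standard Galois-type duality between $lb$ and $ub$: for any $A$, both $A\subseteq ub(lb(A))$ and $A\subseteq lb(ub(A))$ hold directly from the definitions (if $y\in A$, then $y\preccurlyeq z$ for every $z\in ub(A)$, and $x\preccurlyeq y$ for every $x\in lb(A)$), without needing reflexivity. Once these two tautological inclusions are in hand, each implication reduces to producing a candidate via the hypothesis and checking, by unpacking $le$, $la$, $\sup$, $\inf$, that the candidate lands where we want.

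For $(1)\Rightarrow (2)$, I would take a non-empty $A\subseteq X$ and consider $B:=lb(A)$. Since $A\neq\phi$, pick any $y\in A$; by the observation above $y\in ub(B)$, so $B$ is bounded from above. Hypothesis (1) then yields some $s\in \sup(B)=le(ub(B))$. The verification that $s\in\inf(A)$ has two halves: from $A\subseteq ub(B)$ and $s\in le(ub(B))$ we get $s\preccurlyeq y$ for all $y\in A$, hence $s\in lb(A)=B$; and since $s\in ub(B)$ we already know $x\preccurlyeq s$ for all $x\in B=lb(A)$. Together these say $s\in la(lb(A))=\inf(A)$.

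For $(2)\Rightarrow (1)$, I would argue dually. Given $A$ bounded from above, $ub(A)\neq\phi$, so by (2) there exists $t\in\inf(ub(A))=la(lb(ub(A)))$. Using $A\subseteq lb(ub(A))$ together with $t\in la(lb(ub(A)))$ gives $y\preccurlyeq t$ for every $y\in A$, so $t\in ub(A)$; meanwhile $t\in lb(ub(A))$ means $t\preccurlyeq u$ for every $u\in ub(A)$. Hence $t\in le(ub(A))=\sup(A)$.

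The only point that requires genuine care in the merely transitive (non-reflexive) setting is the two inclusions $A\subseteq ub(lb(A))$ and $A\subseteq lb(ub(A))$, since in standard poset arguments these are sometimes read off from $y\preccurlyeq y$. I expect this to be the main (minor) obstacle, but as noted it is immediate from the quantifier definitions of $lb$ and $ub$ and does not use reflexivity. Everything else is bookkeeping with the definitions of $\sup$, $\inf$, $le$, $la$.
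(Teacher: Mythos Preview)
Your proof is correct and follows essentially the same route as the paper: for $(1)\Rightarrow(2)$ both you and the paper set $B=lb(A)$, use $A\subseteq ub(B)$ to see $B$ is bounded above, and then check that any $s\in\sup(B)$ lies in $\inf(A)$; the direction $(2)\Rightarrow(1)$ is the dual argument with $B=ub(A)$. If anything, your write-up is more careful than the paper's in explicitly isolating the inclusions $A\subseteq ub(lb(A))$ and $A\subseteq lb(ub(A))$ and noting that they hold purely by unwinding the quantifiers, without any appeal to reflexivity.
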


\begin{proof}
   $(1)\Rightarrow(2)$: Let \textit{A} be non-empty subset of \textit{X} and $B=lb(A)$. Now, $ub(B)\supseteq A\neq \phi$. Then $\sup  (B) \neq \phi$. Let $x\in \sup  (B)$. Now, $x\in ub(B)$. Then for every $a\in A$, $x\preccurlyeq a$. Then $x\in la(lb(A))=\inf  (A)$. Hence $\inf  (A) \neq \phi$.

$(2)\Rightarrow(1)$: Suppose that \textit{A} is a bounded subset of \textit{X} from above and $B=ub(A)\neq \phi$. Then $\inf  (B) \neq \phi$. Let $x\in \inf  (B)$. Since $A\subseteq lb(B)$ and $x\in \inf  (B)$, then for every $a\in A$, $a\preccurlyeq x$. Thus $x\in le(ub(A))=\sup  (A)$. Therefore $\sup  (A) \neq \phi$.
\end{proof}

\begin{definition}
 A t-set $(X,\preccurlyeq )$ is called a bounded complete domain if it is bounded complete and domain.
\end{definition}

\begin{theorem}
 For a domain t-set $(X,\preccurlyeq )$, the following statements are equivalent:

\begin{enumerate}
\item[(1)] \textit{X} is a bounded complete t-set;

\item[(2)]  $le(X)\neq \phi$ and $\forall  x, y\in X$ with $ub(\{x, y\})\neq \phi$,
$\sup  (\{x, y\})\neq \phi$;

\item[(3)]  If \textit{A} is a finite bounded subset from above, $\sup  (A) \neq \phi$;

\item[(4)]  If \textit{A} is a finite subset of \textit{X}, $ub(A)$ is either $\phi$ or an upper cone;

\item[(5)]  If \textit{A} is bounded subset from above, $\sup  (A) \neq \phi$;

\item[(6)]  If \textit{A} is a non-empty subset of \textit{X}, $\inf  (A) \neq \phi$.
\end{enumerate}
\end{theorem}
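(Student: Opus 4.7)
The plan is to observe that Theorem 3.1 already establishes the equivalence $(1) \Leftrightarrow (2) \Leftrightarrow (3) \Leftrightarrow (4)$ without any assumption beyond transitivity, and that Lemma 3.1 (stated for arbitrary t-sets) gives $(5) \Leftrightarrow (6)$. So the only additional content of Theorem 3.2 is the bridge between these two blocks, and I will supply it by proving $(3) \Leftrightarrow (5)$; the domain hypothesis will enter only in the nontrivial direction.

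The implication $(5) \Rightarrow (3)$ is immediate, since every finite bounded subset is in particular bounded. For $(3) \Rightarrow (5)$, let $A \subseteq X$ with $ub(A) \neq \phi$. If $A = \phi$, then $ub(A) = X$ and $(3)$ applied to the finite set $\phi$ yields $\sup(\phi) = le(X) \neq \phi$. Assume now $A \neq \phi$ and consider
\[
D = \{s \in X : s \in \sup(F) \text{ for some finite } F \subseteq A\}.
\]
Since each finite $F \subseteq A$ satisfies $ub(F) \supseteq ub(A) \neq \phi$, hypothesis $(3)$ gives $\sup(F) \neq \phi$, so $D \neq \phi$.

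Next I would verify that $D$ is directed. Given distinct $s_1, s_2 \in D$ with $s_i \in \sup(F_i)$, set $F = F_1 \cup F_2$, which is finite, included in $A$, and hence bounded from above; pick any $s \in \sup(F)$. Because $F_i \subseteq F$ one has $ub(F) \subseteq ub(F_i)$, so $s \in ub(F_i)$, and since $s_i \in le(ub(F_i))$ it follows that $s_i \preccurlyeq s$. Thus $s \in D \cap ub(\{s_1, s_2\})$, as required. Since $X$ is a domain, there exists $x \in \sup(D)$.

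It remains to show $x \in \sup(A)$. For any $a \in A$, pick $s \in \sup(\{a\}) \subseteq D$; then $a \preccurlyeq s$ and $s \preccurlyeq x$, and transitivity yields $a \preccurlyeq x$, proving $x \in ub(A)$. For minimality, let $y \in ub(A)$: for each $s \in D$ with $s \in \sup(F)$ one has $y \in ub(A) \subseteq ub(F)$, and since $s \in le(ub(F))$ this gives $s \preccurlyeq y$, so $y \in ub(D)$ and therefore $x \preccurlyeq y$. This yields $x \in le(ub(A)) = \sup(A)$. The main obstacle is choosing the right auxiliary directed set to invoke the domain hypothesis; once $D$ is in place, directedness, the upper-bound property, and the least-upper-bound property all follow from careful use of transitivity together with the monotonic behaviour of $ub$ and $le$ under inclusions.
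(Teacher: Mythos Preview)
Your proposal is correct and follows essentially the same route as the paper: reduce to Theorem~3.1 and Lemma~3.1, then bridge the two blocks via $(3)\Leftrightarrow(5)$ using the directed set of finite-subset suprema and the domain hypothesis. The only cosmetic difference is that the paper selects one representative from each $\sup(F)$ to form $D$, whereas you take the full union $\bigcup_{F}\sup(F)$; your version is cleaner and your verification of directedness, $x\in ub(A)$, and minimality is considerably more careful than the paper's sketch.
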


\begin{proof}
From Theorem 3.1 and Lemma 3.1, it rests to prove that (3) and (5) are equivalent.

$(3)\Rightarrow(5)$: Let \textit{A} be a bounded subset of \textit{X} from above and $D=\{x:x$ is a fixed element of $\sup  (F)$ for every finite subset \textit{F} of \textit{A}\}. Since $\sup  (\phi) \neq \phi$ and for every $y\in \sup  (F_1\cup F_2),  y\in ub(\sup  (F_1) \cup \sup  (F_2))$, where $F_1$ and $F_2$ are finite subsets of \textit{A} , then \textit{D} is directed. Thus $\sup  (D) \neq \phi$. Now, for every $l\in \sup  (D)$, $l\in ub(A)$. Suppose that $z\in ub(A)$. Then for all $m\in A, m\preccurlyeq z$ so that $z\in ub(A)$. Thus $l\preccurlyeq z$ so that
 $ l\in \sup  (A)$. Hence $\sup  (D)\subseteq \sup  (A)$. Therefore $\sup  (A)\neq \phi$.

$(5)\Rightarrow(3)$: Obvious.
\end{proof}

\section{Finitely complete t-sets and complete domain t-sets}

\begin{definition}
 A t-set $(X,\preccurlyeq )$ is called finitely complete if \textit{X} is an upper cone, and for all $x, y\in X, ub(\{x,y\})$ is an upper cone.
\end{definition}

One can easily deduce that any finitely complete t-set is a bounded complete t-set.

\begin{theorem}
For a t-set $(X,\preccurlyeq )$, the following statements are equivalent:
\begin{enumerate}
\item[(1)]  \textit{X} is a finitely complete t-set;

\item[(2)]  \textit{X} has a least element and for all $x, y\in X$, $\sup (\{x, y\}) \neq \phi$;

\item[(3)]  If \textit{A} is a finite subset of \textit{X}, then $\sup  (A) \neq \phi$;

\item[(4)]If A is a finite subset of \textit{X}, then $ub(A)$ is an upper cone.
\end{enumerate}
\end{theorem}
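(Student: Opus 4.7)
The plan is to establish the cycle $(1) \Rightarrow (2) \Rightarrow (3) \Rightarrow (4) \Rightarrow (1)$, closely imitating the structure of the proof of Theorem 3.1 but with a key simplification: here $ub(\{x,y\})$ is required to be a (non-empty) upper cone for every pair, so the \emph{bounded from above} qualifier disappears from conditions (2) and (3) entirely, and no separate hypothesis $ub(\{x,y\}) \neq \phi$ is ever needed.

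For $(1) \Rightarrow (2)$, since $X$ is an upper cone I write $X = \uparrow a$, giving $a \in le(X)$; then for any $x, y \in X$, writing $ub(\{x,y\}) = \uparrow b$ supplies $b \in le(ub(\{x,y\})) = \sup(\{x,y\})$. For $(2) \Rightarrow (3)$, I argue by induction on $|A|$: the empty case uses $\sup(\phi) = le(ub(\phi)) = le(X) \neq \phi$; the singleton case $A = \{z\}$ follows from condition (2) applied with $x = y = z$, which yields $\sup(\{z\}) \neq \phi$ directly; and for $A = \{x_1, \ldots, x_n\}$ I successively choose $u_{1,2} \in \sup(\{x_1, x_2\})$, $u_{1,2,3} \in \sup(\{u_{1,2}, x_3\})$, and so on, exactly as in Theorem 3.1 but without the boundedness checks at each stage.

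For $(3) \Rightarrow (4)$, given finite $A$, condition (3) supplies some $x \in le(ub(A))$, and transitivity of $\preccurlyeq$ then yields $ub(A) = \uparrow x$, so $ub(A)$ is an upper cone. Finally, $(4) \Rightarrow (1)$ is immediate by applying (4) to $A = \phi$ (so $X = ub(\phi)$ is an upper cone) and to each two-element subset $\{x,y\}$, which recovers Definition 4.1.

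The main obstacle I foresee lies in the inductive step of $(2) \Rightarrow (3)$: one must check that the final element $l \in \sup(\{u_{1,\ldots,n-1}, x_n\})$ is genuinely a least upper bound of the full set $A$ rather than only of the two-element set at the last stage. This reduces to a transitivity chase: each $u_{i+1,\ldots}$ dominates the previous $u_{i,\ldots}$, which by induction dominates $x_1, \ldots, x_i$, so $l \in ub(A)$; conversely any $m \in ub(A)$ dominates each successive $u$ and so $l \preccurlyeq m$. This is routine but is the one place where reflexivity of $\preccurlyeq$ would have been convenient and must be carefully worked around in the t-set setting.
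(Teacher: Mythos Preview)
Your proposal is correct and follows essentially the same route as the paper: the same cycle $(1)\Rightarrow(2)\Rightarrow(3)\Rightarrow(4)\Rightarrow(1)$, the same singleton trick $\sup(\{z\})=\sup(\{z,z\})$, and the same iterated two-element supremum construction $u_{1,2},u_{1,2,3},\ldots$ for $(2)\Rightarrow(3)$. Your explicit transitivity chase showing $l\in ub(A)$ and $l\preccurlyeq m$ for every $m\in ub(A)$ merely spells out what the paper abbreviates as ``one can deduce that $l\preccurlyeq m$''.
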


\begin{proof}
$(1)\Rightarrow (2)$: Since \textit{X} is an upper cone, then there exists $a\in X$ such that
$\uparrow a = X$. So, $a\in le(X)$. Suppose that $x, y\in X$. Then there exists $z\in ub(\{x,y\})$ such that $\uparrow z = ub(\{x,y\})$. Therefore $z\in \sup  (\{x, y\})$.

$(2)\Rightarrow (3)$: First, the empty set is finite. Since there exists $x\in le(X)$, then there exists $x\in \sup  (\phi)$. Suppose that $A=\{z\}$. Now, we have that  $\sup  (\{z\}) = \sup  (\{z,z\}) \neq \phi$. Let $A=\{x_1, x_2, x_3, ... , x_n\}$, i.e. \textit{A} is a finite set. Now, $A_{ 1,2}=\{x_1,x_2\}$, then there exists $\ u_{1,2}\in \sup  (A_{1,2})$. Put $A_{1,2,3}=\{u_{1,2},x_3\}$ so that there exists
$ u_{1,2,3}\in \sup  (A_{1,2,3})$. We can proceed until consider the set $B=\{u_{1,2, ... ,n-1},x_n\}$ so that there exists $l\in \sup  (B)$. Then $l\in ub(A)$. Let $m\in ub(A)$. One can deduce that $l\preccurlyeq m$. Therefore $ l\in \sup  (A)$.

$(3)\Rightarrow (4)$ : Let \textit{A} be a finite set. Then $\sup  (A) \neq \phi$. Thus, there exists
$l\in le(ub(A))$ so that $\uparrow l=ub(A)$. Therefore $ub(A)$ is an upper cone.

$(4)\Rightarrow (1)$ : Since $\phi$ is finite and $ub(\phi)=X$, then \textit{X} is an upper cone. Since the set $\{x,y\}$ is finite for every $x,y \in X$, then $ub(\{x,y\})$ is an upper cone.
\end{proof}

\begin{definition}
$(X,\preccurlyeq )$ is called a complete domain t-set if it is finitely complete t-set and domain t-set.
\end{definition}

\begin{theorem}
 For a t-set $(X,\preccurlyeq )$, the following statements are equivalent:
\begin{enumerate}
\item[(1)]  \textit{X }is a complete domain t-set;

\item[(2)]  \textit{X} is a bounded complete domain t-set with $la(X)\neq \phi$;

\item[(3)]  If \textit{A} is a subset of \textit{X}, $\inf  (A) \neq \phi$;

\item[(4)]  If \textit{A} is a subset of \textit{X}, $\sup  (A) \neq \phi$;

\item[(5)]  If \textit{A} is a finite subset of \textit{X} or a directed subset of \textit{X},
$\sup  (A) \neq \phi$.
\end{enumerate}
\end{theorem}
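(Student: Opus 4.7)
The plan is to establish the cycle $(1) \Rightarrow (5) \Rightarrow (4) \Rightarrow (3) \Rightarrow (2) \Rightarrow (1)$, drawing on Theorem 3.2 and Theorem 4.1 as the main machinery, together with the upper/lower duality already exploited in Lemma 3.1.

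The implication $(1) \Rightarrow (5)$ is immediate: Theorem 4.1 supplies the suprema of all finite subsets, and the definition of a domain t-set supplies the suprema of all directed subsets. For $(5) \Rightarrow (4)$, I would transplant the construction used in $(3) \Rightarrow (5)$ of Theorem 3.2: given any $A \subseteq X$, form the set $D$ containing, for each finite $F \subseteq A$, a fixed choice of element of $\sup(F)$ (available by (5)). Because any $y \in \sup(F_1 \cup F_2)$ lies in $ub(F_1) \cap ub(F_2)$, and elements of $\sup(F_i) = le(ub(F_i))$ are dominated by everything in $ub(F_i)$, the chosen representatives of $\sup(F_1)$ and $\sup(F_2)$ are both $\preccurlyeq y$, so $D$ is directed. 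The directed part of (5) then gives $\sup(D) \neq \phi$, and a routine comparison with an arbitrary $z \in ub(A)$ shows $\sup(D) \subseteq \sup(A)$, yielding (4).

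For $(4) \Rightarrow (3)$, I would use the lower/upper duality from Lemma 3.1. The empty case reduces to $\inf(\phi) = la(X)$; applying (4) to the full set $X$ gives $\sup(X) = le(ub(X)) = le(la(X)) \subseteq la(X)$, which is therefore nonempty. For nonempty $A$, set $B = lb(A)$; then $A \subseteq ub(B)$ forces $ub(B) \neq \phi$, whence $\sup(B) \neq \phi$ by (4), and any $x \in \sup(B)$ is easily verified to sit in $\inf(A)$. For $(3) \Rightarrow (2)$, the equality $la(X) = \inf(\phi)$ is nonempty by (3), so every subset — in particular every directed $D$ — is bounded above; the dual of the previous argument applied to $ub(D)$ produces $\sup(D) \neq \phi$, so $X$ is a domain. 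Once $X$ is known to be a domain, Theorem 3.2(6) applies (by (3)), and we conclude that $X$ is a bounded complete domain t-set with $la(X) \neq \phi$.

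Finally, $(2) \Rightarrow (1)$: since $la(X) \neq \phi$, every finite subset of $X$ is bounded above, so by Theorem 3.1(4) the set $ub(A)$ for finite $A$ is a (nonempty) upper cone; by Theorem 4.1(4) this says $X$ is finitely complete, and the domain hypothesis is already part of (2). The main obstacle I anticipate is $(5) \Rightarrow (4)$, where one must carefully justify both the directedness of the constructed $D$ and the containment $\sup(D) \subseteq \sup(A)$; the remaining implications are essentially bookkeeping on top of Theorems 3.1, 3.2, 4.1 and the duality of Lemma 3.1.
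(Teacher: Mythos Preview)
Your argument is correct, but you run the cycle in the opposite direction from the paper, which proves $(1)\Rightarrow(2)\Rightarrow(3)\Rightarrow(4)\Rightarrow(5)\Rightarrow(1)$. In the paper's route the passage $(4)\Rightarrow(5)$ is free, $(1)\Rightarrow(2)$ is the one-line observation that finite completeness makes $X$ itself directed (so $la(X)=\sup(X)\neq\phi$), $(2)\Rightarrow(3)$ just quotes Theorem~3.2(6) for nonempty $A$ and uses $la(X)\neq\phi$ for $A=\phi$, and $(3)\Rightarrow(4)$ is Lemma~3.1 after noting $\inf(\phi)=la(X)\neq\phi$. By reversing the cycle you trade the trivial $(4)\Rightarrow(5)$ for a genuine $(5)\Rightarrow(4)$, and so you end up reconstructing the directed family of finite suprema already used inside Theorem~3.2; you also have to re-derive the domain property in $(3)\Rightarrow(2)$ before Theorem~3.2 becomes available. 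Everything you wrote goes through, but the paper's orientation is more economical precisely because it places the ``easy'' implication between (4) and (5) and lets Theorem~3.2 absorb the heavy lifting once the domain hypothesis is already in hand.
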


\begin{proof}
$(1)\Rightarrow(2)$: Any complete domain t-set is bounded complete domain. Now, since for all
$x, y\in X, ub(\{x,y\})$ is an upper cone, then $ub(\{x,y\})\neq \phi$. Hence \textit{X} is directed. Therefore $la(X)=\sup  (X) \neq \phi$.

$(2)\Rightarrow(3)$: Let \textit{A} be a subset of \textit{X}. First, if $A=\phi$, then $X=lb(\phi)$. Since $le(X)\neq \phi$, then there exists $l\in \inf  (\phi)$. Second, if $A\neq \phi$, then from Theorem 3.2(6), $\inf  (A) \neq \phi$.

$(3)\Rightarrow(4)$: Let \textit{A }be a subset of \textit{X}. Since $\inf  (\phi) \neq \phi$, then there exists $l\in la(X)$ so that every subset of \textit{X} is bounded from above. From Lemma 3.1,
$ \sup(A)\neq \phi$ ;

$(4)\Rightarrow(5)$: Obvious.

$(5)\Rightarrow(1)$: Since for every directed subset \textit{A} of \textit{X}, then
$\sup  (A) \neq \phi$.  Hence \textit{X} is a domain t-set. From Theorem 4.1 (3), \textit{X} is a finitely complete t-set.
\end{proof}

\section{Finitarily complete t-sets}

\begin{definition}
Let $(X,\preccurlyeq )$ be a t-set. A subset \textit{A} of \textit{X} is called finitary if there exists a finite subset \textit{F} of \textit{A} with $A\subseteq \uparrow (F)$.
\end{definition}

\begin{proposition}
Let $(X,\preccurlyeq )$ be a t-set and $\{A_j:j\in \{1,2,...,n\}\}$ be a family of finitary subsets of \textit{X}. Then $\bigcup^n_{j=1}{A_j}$ is a finitary subset.
\end{proposition}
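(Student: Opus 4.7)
My plan is to unpack the definition of finitary for each $A_j$ and then take the union of the finite witnesses. Concretely, for each $j\in\{1,2,\dots,n\}$, since $A_j$ is finitary, I can fix a finite subset $F_j\subseteq A_j$ with $A_j\subseteq\,\uparrow\!(F_j)$. I would then set $F:=\bigcup_{j=1}^{n}F_j$ and argue that $F$ witnesses the finitariness of $\bigcup_{j=1}^{n}A_j$.

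The verification consists of three easy checks. First, $F$ is finite, being a finite union of finite sets. Second, $F\subseteq\bigcup_{j=1}^{n}A_j$, because each $F_j\subseteq A_j\subseteq\bigcup_{j=1}^{n}A_j$. Third, to see $\bigcup_{j=1}^{n}A_j\subseteq\,\uparrow\!(F)$, take any $x\in\bigcup_{j=1}^{n}A_j$; then $x\in A_j$ for some $j$, so by the choice of $F_j$ there exists $y\in F_j$ with $y\preccurlyeq x$. Since $y\in F_j\subseteq F$, this gives $x\in\,\uparrow\!(F)$ directly from the definition of $\uparrow(\cdot)$ in Definition 3.2(4).

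There is no real obstacle: the argument uses only the definition of finitary and the elementary fact that $\uparrow\!(F_j)\subseteq\,\uparrow\!(F)$ whenever $F_j\subseteq F$, which in turn follows immediately from the defining existential clause in $\uparrow$. Transitivity of $\preccurlyeq$ is not even needed for this statement; the proof is purely set-theoretic. I would write the proof as one short paragraph presenting $F=\bigcup_{j=1}^{n}F_j$ and verifying the three bullet points above.
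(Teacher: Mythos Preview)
Your proposal is correct and matches the paper's proof essentially line for line: the paper also picks finite witnesses $K_j\subseteq A_j\subseteq\uparrow(K_j)$, takes $K=\bigcup_{j=1}^{n}K_j$, and observes $\bigcup_{j=1}^{n}K_j\subseteq\bigcup_{j=1}^{n}A_j\subseteq\bigcup_{j=1}^{n}\uparrow(K_j)\subseteq\uparrow\bigl(\bigcup_{j=1}^{n}K_j\bigr)$. Your remark that transitivity is not actually needed is a correct and useful observation that the paper does not make explicit.
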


\begin{proof}
Since for every $ j\in \{1,2,...,n\}$ there exists a finite subset $K_j$ such that $K_j\subseteq A_j \subseteq \uparrow (K_j),$ then $\bigcup^n_{j=1}{K_j}\subseteq \bigcup^n_{j=1}{A_j}\subseteq \bigcup^n_{j=1}{\uparrow (K_j)}\subseteq \uparrow (\bigcup^n_{j=1}{K_j})$. Since $\bigcup^n_{j=1}{K_j}$ is finite, then it is clear that  $\bigcup^n_{j=1}{A_j}$ is finitary.
\end{proof}

\begin{definition}
 A t-set $(X,\preccurlyeq )$ is called finitarily complete if   \textit{X} is finitary,
  $\forall  x, y\in X, ub(\{x,y\})$ is finitary.
\end{definition}

\begin{theorem}
Let $(X,\preccurlyeq )$ be a t-set. Then the following statements are equivalent:
\begin{enumerate}
\item[(1)]  \textit{X }is finitarily complete;

\item[(2)]  \textit{X} is finitary and if \textit{A} and \textit{B} are finitary upper sets, then $A\cap B$ is finitary;

\item[(3)]  If $A_1,..., A_n$ are finitary subsets of \textit{X}, then $\bigcap^n_{j=1}{A_j}$ is finitary;

\item[(4)]  If \textit{B} is finite subset of \textit{X}, then $ub(B)$ is finitary.
\end{enumerate}
\end{theorem}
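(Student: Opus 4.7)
I would establish the cyclic chain $(1) \Rightarrow (4) \Rightarrow (3) \Rightarrow (2) \Rightarrow (1)$. Two tools are used throughout: Proposition 5.1 (finite unions of finitary sets are finitary), and transitivity of $\preccurlyeq$, which ensures the witness elements built from one step actually lie in the intersection of interest for the next step.

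For $(1) \Rightarrow (4)$, I would induct on $n = |B|$. The cases $n = 0$ and $n = 1$ reduce to (1) directly, since $ub(\emptyset) = X$ and $ub(\{x\}) = ub(\{x, x\})$. For the induction step write $ub(B \cup \{z\}) = ub(B) \cap ub(\{z\})$; let $\{f_1, \ldots, f_k\}$ be a finite witness for $ub(B)$ (by induction), and for each $f_i$ apply (1) to obtain a finite witness $G_i \subseteq ub(\{f_i, z\})$. Since $f_i \in ub(B)$, transitivity gives $G_i \subseteq ub(B \cup \{z\})$; and any $y \in ub(B \cup \{z\})$ lies above some $f_i$ and above $z$, so $y \in ub(\{f_i, z\}) \subseteq \uparrow(G_i)$. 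Thus $\bigcup_{i} G_i$ is the required finite witness.

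For $(4) \Rightarrow (3)$, let $A_1, \ldots, A_n$ be finitary with witnesses $F_j \subseteq A_j \subseteq \uparrow(F_j)$. Any $y \in \bigcap_j A_j$ lies above some tuple $(f_{1,i_1}, \ldots, f_{n,i_n}) \in F_1 \times \cdots \times F_n$, hence $y \in ub(\{f_{1,i_1}, \ldots, f_{n,i_n}\})$. Each such $ub$-set is finitary by (4); transitivity ensures its witness elements remain in $\bigcap_j A_j$ (using that each $A_j$ is closed upward along $\preccurlyeq$ in the relevant decomposition), and then Proposition 5.1 delivers a finite witness for $\bigcap_j A_j$. The step $(3) \Rightarrow (2)$ is immediate, since finitary upper sets are a special case of finitary subsets.

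The most delicate implication is $(2) \Rightarrow (1)$. Finitarity of $X$ is given, so the real work is to prove $ub(\{x, y\})$ is finitary for each pair. I would write $ub(\{x, y\}) = \uparrow(x) \cap \uparrow(y)$, observe that each factor is an upper set by transitivity, and aim to invoke (2). The main obstacle is that in a general t-set $\uparrow(x)$ need not be finitary on its own: taking the singleton $\{x\}$ as a witness would require $x \preccurlyeq x$, which a t-set does not assume. The intended repair is to exploit the finite witness $F$ for $X$: every $z \in \uparrow(x)$ lies above some $f \in F$, so $\uparrow(x)$ decomposes as a finite union of pieces of the form $ub(\{x, f\})$, from which one assembles a finite witness sitting inside $\uparrow(x)$; then (2) applied to $\uparrow(x)$ and $\uparrow(y)$ yields finitarity of $ub(\{x, y\})$. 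Carrying out this reduction cleanly, without reintroducing (1) circularly, is the principal technical hurdle.
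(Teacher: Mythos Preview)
Your cycle runs in the opposite direction from the paper's, which goes $(1)\Rightarrow(2)\Rightarrow(3)\Rightarrow(4)\Rightarrow(1)$. That choice of direction is not cosmetic: by closing the loop at $(4)\Rightarrow(1)$ the paper avoids entirely the difficulty you flag in $(2)\Rightarrow(1)$. Indeed $X=ub(\emptyset)$ and $ub(\{x,y\})$ are both instances of (4), so $(4)\Rightarrow(1)$ is a one-line observation requiring no reflexivity. Your attempt to rescue $(2)\Rightarrow(1)$ by decomposing $\uparrow(x)$ via the finite witness $F$ for $X$ runs into exactly the circularity you suspect: writing $\uparrow(x)=\bigcup_{f\in F} ub(\{x,f\})$ only helps if each $ub(\{x,f\})$ is already known to be finitary, which is (1). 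You have not closed your cycle.

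There is a second genuine gap, in your $(4)\Rightarrow(3)$. You cover $\bigcap_j A_j$ by the sets $ub(\{f_{1,i_1},\ldots,f_{n,i_n}\})$ and invoke (4) to get a finite witness $G$ for each. You then assert that ``transitivity ensures its witness elements remain in $\bigcap_j A_j$'', appealing to the $A_j$ being ``closed upward along $\preccurlyeq$''. But the $A_j$ in (3) are arbitrary finitary subsets, not upper sets: an element $g\in G$ satisfies $f_{j,i_j}\preccurlyeq g$, yet $f_{j,i_j}\in A_j$ does not force $g\in A_j$. So your assembled witness need not sit inside $\bigcap_j A_j$, and the definition of finitary requires it to. The paper, by contrast, proves $(1)\Rightarrow(2)$ first (where $A,B$ \emph{are} upper sets, so $A=\uparrow(F_1)$, $B=\uparrow(F_2)$, and $A\cap B=\bigcup_{a\in F_1,\,b\in F_2}ub(\{a,b\})$ is a finite union of finitary sets via Proposition~5.1), and then pushes to (3) by induction. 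Your $(1)\Rightarrow(4)$ induction is fine, but it is the easy direction; the content is elsewhere.
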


\begin{proof}
 $(1)\Rightarrow(2)$:  If \textit{X} is finitarily complete, then \textit{X} is finitary. If \textit{A} is finitary upper set, then there exists a finite set $F_1\subseteq A$ such that $A\subseteq \uparrow (F)$ and $\uparrow (A)\subseteq A$. Hence $A=\uparrow (F_1)$ and if \textit{B} is finitary upper set, then there exists a finite set $F_2\subseteq B$ such that $B\subseteq \uparrow (F_2)$ and $\uparrow (B)\subseteq B$. Hence $B=\uparrow (F_2)$. Thus $A\cap B=\uparrow (F_1)\cap \uparrow (F_2)=(\bigcup_{a\in F_1}{(\uparrow a)})\bigcap (\bigcup_{b\in F_2}{(\uparrow b)})=\bigcup_{a\in F_1,b\in F_2}{(\uparrow a \cap \uparrow b)}$. So, $A\cap B$ is a finite union of finitary sets. Therefore $A\cap B$ is finitary.

 $(2)\Rightarrow(3)$:  By indication. The empty intersection is \textit{X}.

 $(3)\Rightarrow(4)$:  If \textit{B} is finite, then $ub(B)=\bigcup_{e\in B}{(\uparrow e)}$ upper cones are
 finitary. This is finite intersection of finitary sets.

 $(4)\Rightarrow(1)$:  \textit{X} is the set of upper bounds of $\phi$ , and $\uparrow x \cap \uparrow y$ is the set of upper bounds of $\{x,y\}$.
\end{proof}

\begin{theorem}
\begin{enumerate}
\item[(1)] Every finite pre-ordered set $(X,\preccurlyeq )$ is finitarily complete;

\item[(2)]  Every bounded complete $(X,\preccurlyeq )$ pre-ordered set is finitarily complete.
\end{enumerate}
\end{theorem}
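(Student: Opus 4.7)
The plan is to apply the equivalence $(1) \Leftrightarrow (4)$ from Theorem 5.1, which reduces finitary completeness of $X$ to the single condition: for every finite subset $B \subseteq X$, the set $ub(B)$ is finitary. The key preliminary observation I will use throughout is that in any pre-ordered (hence reflexive) set, reflexivity forces generators to lie in the upper cones they generate: $x \in \uparrow(x)$ for all $x$, and more generally $A \subseteq \uparrow(A)$ for any subset $A$.

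For part (1), let $X$ be a finite pre-ordered set and let $B \subseteq X$ be finite (automatic). Then $ub(B) \subseteq X$ is also finite. I claim that any finite subset $A$ of a pre-ordered set is finitary: simply take $F = A$. Then $F$ is a finite subset of $A$, and by reflexivity $A \subseteq \uparrow(A) = \uparrow(F)$, so the definition is satisfied. Applied to $A = ub(B)$, this shows $ub(B)$ is finitary, and Theorem 5.1 concludes the proof.

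For part (2), let $X$ be a bounded complete pre-ordered set and let $B \subseteq X$ be finite. By Theorem 3.1, statement (4), we know $ub(B)$ is either empty or an upper cone. If $ub(B) = \emptyset$, the witness $F = \emptyset$ trivially shows $ub(B)$ is finitary. If $ub(B) = \uparrow(x)$ for some $x \in X$, then by reflexivity $x \preccurlyeq x$, so $x \in \uparrow(x) = ub(B)$. Hence $F = \{x\}$ is a finite subset of $ub(B)$ satisfying $ub(B) = \uparrow(F)$, so $ub(B)$ is finitary. Another application of Theorem 5.1 finishes the argument.

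No single step here poses a real obstacle; the proof is essentially a bookkeeping exercise once one sees that reflexivity provides the required finite witnesses for free. The only conceptual point worth highlighting is why the pre-order hypothesis cannot be dropped in part (2): for a general bounded complete t-set, $ub(B)$ may equal $\uparrow(x)$ without $x$ itself lying in $\uparrow(x)$, in which case $\{x\}$ is not a finite subset of $ub(B)$ and the direct witness fails. Thus the role of reflexivity in both parts is precisely to secure the membership $x \in \uparrow(x)$ needed to exhibit the finite set $F \subseteq ub(B)$ with $ub(B) \subseteq \uparrow(F)$.
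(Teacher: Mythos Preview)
Your proof is correct and follows essentially the same route as the paper: both arguments rest on the observation that, under reflexivity, finite sets and upper cones are automatically finitary, and then check the relevant $ub$-sets accordingly. The only cosmetic difference is that you route through condition~(4) of Theorem~5.1 (and Theorem~3.1(4) for part~(2)), whereas the paper verifies Definition~5.2 directly by checking $X$ and the sets $ub(\{x,y\})$; since Theorem~5.1 shows these conditions are equivalent, the two presentations are interchangeable. Your closing remark on why reflexivity is essential is a useful addition not present in the paper.
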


\begin{proof}

\begin{enumerate}
\item[(1)] Since \textit{X} is finite and $ub(\{x,y\})$ for each $x, y\in X$ is finite also, then one can easily deduce that \textit{X} is finitarily complete.

\item[(2)]  Since \textit{X} is upper cone, then $X=\uparrow \{x\}$ for some $x\in X$. Hence \textit{X} is finitary. Also, one can deduce that $ub(\{x,y\})$ for each $x, y\in X$ is a finitary subset of \textit{X} because $ub(\{x,y\})$ is empty or upper cone. Therefore, \textit{X }is finitarily complete.
\end{enumerate}
\end{proof}

The following two examples illustrate that the concepts of bounded completeness and finiteness are independent notions for t-sets (moreover for posets).

\begin{example}
Let $X=\{a,b,c,d\}$ and $\preccurlyeq =\{(a,a),(b,b),(c,c),(d,d)\}$. Then $(X,\preccurlyeq )$ is finite poset but not bounded complete.
\end{example}

\begin{example}
Let $N=\{1,2,3,...\}$ and $\preccurlyeq $ be the usual partially ordered relation on \textit{N}. Then $(N,\preccurlyeq )$ is bounded complete but not finite.
\end{example}

\section{Strongly compactly complete t-sets}

\begin{definition}
A triple $(X,\preccurlyeq ,\tau )$ is called a topological t-set, where $(X,\preccurlyeq )$ is a t-set and $(X,\tau )$ is a topological space.
\end{definition}

\begin{definition}
Let $(X,\preccurlyeq ,\tau )$ be a topological t-set. A subset \textit{A} of \textit{X} is called strongly compact if for all $O\in \tau $ such that $A\subseteq O$, there exists a finitary subset \textit{F} of \textit{X} such that $A\subseteq F\subseteq O$.
\end{definition}

\begin{theorem}
Let $(X,\preccurlyeq ,\tau )$ be a topological t-set such that each member of $\tau $ is an upper subset. If a subset \textit{A} of \textit{X} is strongly compact, then \textit{A} is compact.
\end{theorem}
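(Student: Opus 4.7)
The plan is to unpack both hypotheses (strong compactness of $A$ and the upper-set property of the topology) in the natural way and connect them through the finite witness of a finitary set. Given an open cover $\{O_i\}_{i\in I}$ of $A$, I would first form the union $O=\bigcup_{i\in I}O_i$. This is an open set containing $A$, so by strong compactness (Definition 6.2) there exists a finitary subset $F\subseteq X$ with $A\subseteq F\subseteq O$. By Definition 5.1 applied to $F$, there is then a finite subset $K\subseteq F$ with $F\subseteq \uparrow(K)$.

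Next I would use the cover to assign indices. Because $K\subseteq F\subseteq O=\bigcup_{i\in I}O_i$, for each $k\in K$ I can pick an index $i_k\in I$ with $k\in O_{i_k}$. The set $\{O_{i_k}:k\in K\}$ is a finite subfamily of $\{O_i\}_{i\in I}$, so it remains only to show that it already covers $A$.

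Here the upper-subset hypothesis on $\tau$ enters: since $O_{i_k}$ is an upper set and $k\in O_{i_k}$, every element $x$ with $k\preccurlyeq x$ also lies in $O_{i_k}$, i.e.\ $\uparrow k\subseteq O_{i_k}$. Combining these containments,
\[
A\;\subseteq\;F\;\subseteq\;\uparrow(K)\;=\;\bigcup_{k\in K}\uparrow k\;\subseteq\;\bigcup_{k\in K}O_{i_k},
\]
which is the desired finite subcover.

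The argument is essentially a bookkeeping exercise, so I do not expect a genuine obstacle; the only subtlety to check carefully is the containment $K\subseteq F\subseteq O$, which is what guarantees that the indices $i_k$ can be chosen from the original cover (rather than from some extension). This uses the precise form of Definition 5.1, where the finite witness lies inside the finitary set itself. Once that is verified, the implication follows immediately from the upward closure of each $O_{i_k}$.
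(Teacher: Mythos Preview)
Your proof is correct and follows essentially the same route as the paper's own argument: form the union of the cover, apply strong compactness to get a finitary set between $A$ and that union, extract the finite witness, pick a cover element for each point of the witness, and use the upper-set hypothesis to conclude that the finite subfamily already covers $A$. The only cosmetic difference is that the paper swaps the roles of your letters $F$ and $K$ and phrases the last step as $\uparrow(K)\subseteq\uparrow\bigl(\bigcup_{k}O_{i_k}\bigr)=\bigcup_{k}O_{i_k}$ rather than arguing $\uparrow k\subseteq O_{i_k}$ pointwise.
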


\begin{proof}
Let $\Re$ be an open cover of \textit{A}, i.e. $A\subseteq \bigcup_{B\in \Re}{B}$ and
 $\Re \subseteq \tau $. Put $\bigcup_{B\in \Re}{B}=G$. Then $A\subseteq G\in \tau $. Since \textit{A} is strongly compact, then there exists a finitary subset \textit{K} of \textit{G} such that $A\subseteq K\subseteq G$ so that there exists a finite subset \textit{F} of \textit{K} such that $K\subseteq \uparrow (F)$. Then for every $x\in F$ there exists $B_x\in \Re$ such that $x\in B_x$. So, $F\subseteq \bigcup_{x\in F}{B_x}$. Hence $A\subseteq K\subseteq \uparrow (F)\subseteq \uparrow (\bigcup_{x\in F}{B_x})=\bigcup_{x\in F}{B_x}$. Therefore, \textit{A} is compact.
\end{proof}

\begin{corollary}
\begin{enumerate}
\item[(1)] If \textit{A} is strongly compact subset of \textit{X}  with respect to  the topological t-set  $(X,\preccurlyeq ,{\tau }_{Alx})$, then \textit{A} is compact, where ${\tau }_{Alx}$ is the Alexandroff topology induced by $"\preccurlyeq "$;

\item[(2)]  If \textit{A} is a strongly compact subset of \textit{X} with respect to the topological t-set $(X,\preccurlyeq ,{\tau }_{s^*})$ then \textit{A} is compact, where ${\tau }_{S^*}$  is the Scott*-topology induced by $"\preccurlyeq "$.
\end{enumerate}
\end{corollary}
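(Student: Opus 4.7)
The plan is to deduce both parts as immediate specializations of Theorem 6.1. Theorem 6.1 tells us that in any topological t-set $(X,\preccurlyeq ,\tau)$ whose open sets are all upper subsets, strong compactness implies compactness. So the task reduces to verifying that the Alexandroff topology $\tau_{Alx}$ and the Scott*-topology $\tau_{S^*}$ each consist entirely of upper sets with respect to $\preccurlyeq$.

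For part (1), I would recall the definition of the Alexandroff topology induced by $\preccurlyeq$: its open sets are precisely the upper sets (equivalently, arbitrary unions of sets of the form $\uparrow x$). Thus each $O\in \tau_{Alx}$ is an upper subset of $X$ by construction, and Theorem 6.1 applies directly to give that every strongly compact $A\subseteq X$ is compact in $(X,\tau_{Alx})$.

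For part (2), I would similarly invoke the definition of the Scott*-topology: a set $O$ is Scott*-open provided $O$ is an upper set (i.e.\ $\uparrow O\subseteq O$) and, in addition, satisfies the usual inaccessibility condition with respect to directed suprema (whenever a directed subset $D$ has a supremum meeting $O$, then $D$ itself meets $O$). The first clause of the definition is exactly what Theorem 6.1 requires, so every $O\in \tau_{S^*}$ is an upper subset of $X$, and applying Theorem 6.1 again gives the conclusion.

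I do not anticipate any real obstacle: the corollary is a two-line deduction from Theorem 6.1, and the only content is the fact, standard in the literature cited in the Preliminaries, that both $\tau_{Alx}$ and $\tau_{S^*}$ are refinements of the upper-set topology. The most delicate point is merely to state cleanly the definitions of $\tau_{Alx}$ and $\tau_{S^*}$ for a transitive (not necessarily reflexive) relation $\preccurlyeq$, so that the hypothesis ``each member of $\tau$ is an upper subset'' is seen to hold verbatim in both settings; once that is recorded, the proof is a direct appeal to Theorem 6.1 in each case.
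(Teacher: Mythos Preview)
Your proposal is correct and matches the paper's intent: the paper states this result as an immediate corollary of Theorem~6.1 without giving a separate proof, so the implied argument is precisely to observe that every member of $\tau_{Alx}$ and of $\tau_{S^*}$ is an upper set and then invoke Theorem~6.1. Your write-up simply makes explicit the verification that both topologies consist of upper sets, which is exactly what the corollary placement presupposes.
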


\begin{theorem}
Let $(X,\preccurlyeq )$ be a t-set and $\{A_j:j\in \{1,2,...,n\}\}$ be a family of finitary subsets of \textit{X}. Then $\bigcup^n_{j=1}{A_j}$ is a finitary subset.
\end{theorem}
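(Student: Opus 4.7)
The plan is to unwind the definition of "finitary" once for each member of the family and then take the union of the witnessing finite sets. Note that this theorem is in fact a verbatim restatement of Proposition 5.1, so the same argument applies; I would write it out as follows.

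First, I would invoke the hypothesis that each $A_j$ is finitary: for every $j\in\{1,2,\dots,n\}$ there exists a finite subset $K_j\subseteq A_j$ with $A_j\subseteq\uparrow(K_j)$. Then I would set $K=\bigcup_{j=1}^{n}K_j$, observe that $K$ is a finite union of finite sets and is therefore finite, and note that $K\subseteq\bigcup_{j=1}^{n}A_j$ since each $K_j\subseteq A_j$.

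Next I would verify the containment $\bigcup_{j=1}^{n}A_j\subseteq\uparrow(K)$. For this the key observation is the identity $\bigcup_{j=1}^{n}\uparrow(K_j)\subseteq\uparrow\bigl(\bigcup_{j=1}^{n}K_j\bigr)$, which is immediate from the definition of $\uparrow$: if $x\in\uparrow(K_j)$ there is some $y\in K_j\subseteq K$ with $y\preccurlyeq x$, and hence $x\in\uparrow(K)$. Combining with $A_j\subseteq\uparrow(K_j)$ for each $j$ gives the required inclusion.

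The combined chain $K\subseteq\bigcup_{j=1}^{n}A_j\subseteq\uparrow(K)$ with $K$ finite is exactly the definition of finitary, so $\bigcup_{j=1}^{n}A_j$ is finitary. There is no real obstacle here: the only point requiring even a line of explanation is the distributivity of $\uparrow$ over finite unions, and even that follows directly from the definition without invoking transitivity or any previously established result.
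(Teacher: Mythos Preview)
Your proof is correct and follows exactly the same approach as the paper: choose finite witnesses $K_j\subseteq A_j$ with $A_j\subseteq\uparrow(K_j)$, take their union $K=\bigcup_{j=1}^n K_j$, and observe the chain $K\subseteq\bigcup_{j=1}^n A_j\subseteq\bigcup_{j=1}^n\uparrow(K_j)\subseteq\uparrow(K)$. You also correctly note that this theorem is a verbatim restatement of Proposition~5.1, and indeed the paper repeats the same proof word for word.
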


\begin{proof}
Since for every $j\in \{1,2,...,n\}$ there exists a finite subset $K_j$ such that $K_j\subseteq A_j\subseteq \uparrow (K_j)$, then $\bigcup^n_{j=1}{K_j}\subseteq \bigcup^n_{j=1}{A_j}\subseteq \bigcup^n_{j=1}{\uparrow (K_j)}\subseteq \uparrow (\bigcup^n_{j=1}{K_j})$. Since $\bigcup^n_{j=1}{K_j}$ is finite, then it is clear that  $\bigcup^n_{j=1}{A_j}$ is finitary.
\end{proof}

\begin{theorem}
Let $(X,\preccurlyeq ,\tau )$ be a topological t-set and $\{A_j:j\in \{1,2,...n\}\}$ be a family of strongly compact subsets. Then $\bigcup^n_{j=1}{A_j}$ is strongly compact subset.
\end{theorem}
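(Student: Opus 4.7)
The plan is to reduce the statement directly to the preceding Theorem (the finite union of finitary sets is finitary) via the definition of strong compactness. Let me outline the steps.

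First, I would fix an arbitrary $O\in\tau$ with $\bigcup_{j=1}^{n}A_j\subseteq O$. Since containment in a union forces containment of each piece, this yields $A_j\subseteq O$ for every $j\in\{1,2,\dots,n\}$. This is the only genuine set-theoretic manipulation needed up front, and it is what converts the global open cover condition on the union into $n$ separate open cover conditions on the pieces.

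Next, for each $j$ I would invoke the assumption that $A_j$ is strongly compact with respect to $(X,\preccurlyeq,\tau)$ to produce a finitary subset $F_j$ of $X$ with $A_j\subseteq F_j\subseteq O$. Taking the union over $j$ gives
\[
\bigcup_{j=1}^{n}A_j\;\subseteq\;\bigcup_{j=1}^{n}F_j\;\subseteq\; O.
\]
To finish, I need to know that $\bigcup_{j=1}^{n}F_j$ is itself a finitary subset of $X$; this is precisely the content of the preceding theorem (finite unions of finitary sets are finitary), so I can cite it verbatim and conclude that $\bigcup_{j=1}^{n}A_j$ satisfies the defining condition of strong compactness.

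I do not expect any real obstacle: the only nontrivial ingredient is the closure of finitary subsets under finite unions, which is given immediately before the statement. The argument is essentially a one-line packaging of that closure property with the definitions, and no appeal to the transitivity of $\preccurlyeq$ or to the topology beyond what is implicit in the definition of strong compactness is required.
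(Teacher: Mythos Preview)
Your argument is correct and matches the paper's proof essentially line for line: fix $O\in\tau$ containing the union, apply strong compactness of each $A_j$ to get finitary $F_j$ with $A_j\subseteq F_j\subseteq O$, take the union, and invoke Theorem~6.2 to conclude that $\bigcup_{j=1}^{n}F_j$ is finitary. There is nothing to add.
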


\begin{proof}
Suppose $O\in \tau $  such that $\bigcup^n_{j=1}{A_j}\subseteq O$. Then for all $j\in J$ there exists a finitary subset $B_j$ such that $A_j\subseteq B_j\subseteq O$ so that $\bigcup^n_{j=1}{A_j}\subseteq \bigcup^n_{j=1}{B_j}\subseteq O$. From Theorem 6.2, $\bigcup^n_{j=1}{B_j}$ is finitary. Hence $\bigcup^n_{j=1}{A_j}$ is strongly compact.
\end{proof}

\begin{definition}
Let$(X,\preccurlyeq ,\tau )$ be a topological t-set. \textit{X} is called strongly compactly complete t-set if \textit{X} is strongly compact and for every $x,y\in X,ub(\{x,y\})$ is strongly compact.
\end{definition}

\begin{theorem}
Let $(X,\preccurlyeq ,\tau )$ be a topological t-set. Consider the following statements:

\begin{enumerate}
\item[(1)]  \textit{X} is strongly compactly complete;

\item[(2)]  \textit{X} is finitary and the intersection of two finitary upper sets is strongly compact. Then:
\end{enumerate}
\noindent (A)  (1) $\Rightarrow $ (2).

\noindent (B) If  $"\preccurlyeq "$  is reflexive, then (2) $\Rightarrow $ (1).

\end{theorem}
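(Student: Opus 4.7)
The plan is to treat the two directions (A) and (B) separately, relying on Theorem 6.3 (finite unions of strongly compact sets are strongly compact) together with the identity $ub(\{x,y\})=\uparrow x\cap \uparrow y$.

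For (A), I would first establish that $X$ itself is finitary by applying the definition of strong compactness of $X$ to the open set $O=X$: this yields a finitary set $F$ with $X\subseteq F\subseteq X$, forcing $F=X$, and hence $X$ is finitary. Next, let $A$ and $B$ be finitary upper subsets. I would observe that a finitary upper set must coincide with the upper closure of its finite witness: if $F_A\subseteq A$ is finite with $A\subseteq \uparrow(F_A)$, then because $A$ is upper we also have $\uparrow(F_A)\subseteq A$, giving $A=\uparrow(F_A)$; similarly $B=\uparrow(F_B)$. Distributing the intersection yields
\[
A\cap B = \bigcup_{a\in F_A,\, b\in F_B}\bigl(\uparrow a\cap \uparrow b\bigr) = \bigcup_{a\in F_A,\, b\in F_B} ub(\{a,b\}),
\]
a finite union in which each summand is strongly compact by hypothesis (1). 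Theorem 6.3 then delivers that $A\cap B$ is strongly compact.

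For (B), assuming reflexivity of $\preccurlyeq$, I would first verify that every principal upper cone $\uparrow x$ is a finitary upper set: transitivity of $\preccurlyeq$ makes $\uparrow x$ upper, while reflexivity guarantees $x\in \uparrow x$, so $\{x\}\subseteq \uparrow x\subseteq \uparrow\{x\}$, certifying $\uparrow x$ as finitary with witness $\{x\}$. Consequently, for any $x,y\in X$, $ub(\{x,y\})=\uparrow x\cap \uparrow y$ is the intersection of two finitary upper sets and is therefore strongly compact by hypothesis (2). It remains to verify that $X$ itself is strongly compact: the only open superset of $X$ is $X$, and $X$ is finitary by (2), so the finitary witness $F=X$ satisfies $X\subseteq F\subseteq X$.

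The main obstacle is the use of reflexivity in direction (B): without $x\preccurlyeq x$, the set $\uparrow x$ need not contain $x$, so the singleton $\{x\}$ fails to witness $\uparrow x$ as finitary, and one loses the bridge that feeds $ub(\{x,y\})$ into hypothesis (2). No such obstruction arises in direction (A), because the finitary upper sets there come already packaged with their own witnesses, and the argument is purely combinatorial via Theorem 6.3.
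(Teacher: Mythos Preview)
Your proposal is correct and follows essentially the same argument as the paper: both directions proceed exactly as you outline, via the identity $ub(\{x,y\})=\uparrow x\cap\uparrow y$, the reduction of a finitary upper set to $\uparrow(F)$ for its finite witness, the distributive decomposition of the intersection into a finite union of sets $\uparrow a\cap\uparrow b$, and the appeal to Theorem~6.3. (The paper's text actually cites Theorem~6.2 at that point, which appears to be a typo; your citation of Theorem~6.3 is the right one.)
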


\begin{proof}

\noindent(A) Since \textit{X} is strongly compact and open, then there exists a finitary set \textit{B} of \textit{X} such that $X\subseteq B\subseteq X$. So, \textit{X} is finitary. Suppose \textit{A} and \textit{B} be two finitary upper sets. Then there are finite sets \textit{E} and \textit{F} such that $A\subseteq \uparrow (E)\subseteq \uparrow (A)\subseteq A$ and $B\subseteq \uparrow (F)\subseteq \uparrow (B)\subseteq B$. Hence, we have $A=\uparrow (E)$ and $B=\uparrow (F)$. Now,  $A\cap B=\uparrow (E) \cap \uparrow (F)=\bigcup_{e\in E,f\in F}{(\uparrow e\cap \uparrow f)}$. Therefore, from Theorem 6.2, $A\cap B$ is strongly compact.

\noindent (B)  Since \textit{X} is finitary and the only open set containing \textit{X}  is \textit{X} itself, then \textit{X}  is strongly compact. Let $x,y\in X$. Since $"\preccurlyeq "$ is reflexive, then for every $\ x\in X,\uparrow \{x\}$ is finitary and since $"\preccurlyeq "$ is transitive, then $\uparrow x$ is upper set. Hence $\uparrow \{x\} \cap \uparrow \{y\}$ is strongly compact.
\end{proof}

\begin{definition}
Let $(X,\preccurlyeq ,\tau )$ be a topological t-set. \textit{X} is called compactly complete if\textit{ X} is compact and for all $x,y\in X,ub(\{x,y\})$ is compact.
\end{definition}

\begin{theorem}
Let $(X,\preccurlyeq ,\tau )$ be a topological t-set. Consider the following statements:

\begin{enumerate}
\item[(1)]  \textit{X} is compactly complete;

\item[(3)]   \textit{X} is finitary and the intersection of two finitary upper sets is compact. Then:
\end{enumerate}

\noindent (A)  If $\tau $ has the property \textit{F}, then $(1)\Rightarrow (2)$ ;

\noindent (B)  If $"\preccurlyeq "$ is reflexive, and each member of $\tau $ is an upper set then $(2)\Rightarrow (1)$.

\end{theorem}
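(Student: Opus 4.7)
The plan is to follow the template of Theorem 6.3 almost verbatim, substituting ``compact'' for ``strongly compact'' throughout, and invoking two facts about finite unions: that a finite union of compact sets is compact (standard), and Theorem 6.2 of this paper (a finite union of finitary sets is finitary). The statements (1) and (2) are set up so that the same decomposition of $\uparrow(E)\cap\uparrow(F)$ used in Theorem 6.3 transports the desired property across the equivalence.

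For part (A), I would first observe that compact completeness gives in particular that $X$ itself is compact. To upgrade this to $X$ being finitary I need the hypothesis that $\tau$ has property $F$; in the strongly compact analogue the finitariness of $X$ came directly from the definition of strong compactness applied to the open cover $\{X\}$, so this is exactly the step where the extra condition on $\tau$ must enter. Once $X$ is known to be finitary, I would take two finitary upper subsets $A$ and $B$, note by the argument of Theorem 6.3(A) that they must be of the form $A=\uparrow(E)$ and $B=\uparrow(F)$ for finite subsets $E,F\subseteq X$, and decompose
\[
 A\cap B=\uparrow(E)\cap\uparrow(F)=\bigcup_{e\in E,\,f\in F}(\uparrow e\cap \uparrow f)=\bigcup_{e\in E,\,f\in F}ub(\{e,f\}).
\]
Each $ub(\{e,f\})$ is compact by hypothesis (1), so $A\cap B$ is a finite union of compact sets, hence compact.

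For part (B), I would first show $X$ is compact. Since $X$ is finitary there is a finite $F_{0}\subseteq X$ with $X\subseteq\uparrow(F_{0})$; given any open cover $\mathfrak{R}$ of $X$, each $x\in F_{0}$ lies in some $O_{x}\in\mathfrak{R}$, and because every member of $\tau$ is an upper set, $\uparrow x\subseteq O_{x}$, whence $X\subseteq\uparrow(F_{0})\subseteq\bigcup_{x\in F_{0}}O_{x}$ is a finite subcover. For $x,y\in X$, reflexivity of $\preccurlyeq$ gives that $\uparrow x$ and $\uparrow y$ are finitary (witnessed by $\{x\}$ and $\{y\}$), and transitivity of $\preccurlyeq$ gives that they are upper sets; applying hypothesis (2) yields that $\uparrow x\cap\uparrow y=ub(\{x,y\})$ is compact, as required.

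The main obstacle is pinning down exactly what ``property $F$'' says, since the definition is not reproduced in the excerpt; I would trace the strongly-compact proof to isolate the weakest condition on $\tau$ that promotes compactness of $X$ to finitariness of $X$ (typically that $X$ is contained in some finitary open set, or that $\tau$ admits a basis of finitary sets). Everything else is mechanical once the finite-union stability of compact sets and the formula $\uparrow E\cap \uparrow F=\bigcup_{e,f}ub(\{e,f\})$ are in hand.
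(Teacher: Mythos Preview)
Your proposal is correct and follows essentially the same route as the paper: in (A) you use property~$F$ to get $X$ finitary and then decompose $A\cap B$ as a finite union of sets $ub(\{e,f\})$, each compact by hypothesis; in (B) you use reflexivity to make each $\uparrow x$ finitary and transitivity to make it upper, then apply (2). The only minor variation is that for the compactness of $X$ in (B) the paper notes that a finitary $X$ is trivially strongly compact (its only open superset being $X$ itself) and then invokes Theorem~6.1, whereas you extract a finite subcover directly from the generating set $F_0$; both arguments use the upper-set hypothesis on $\tau$ in the same way, and your concern about ``property~$F$'' is well-placed, since the paper itself uses it only to assert that $X$ is finitary without further elaboration.
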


\begin{proof}
\noindent  (A) since $\tau $ has the property \textit{F}, then \textit{X} is finitary. Let \textit{U} and \textit{V} be two finitary upper sets. Then there are finite sets \textit{E} and \textit{M} such that $U\subseteq \uparrow (E)\subseteq \uparrow (U)\subseteq U$ and  $V\subseteq \uparrow (M)\subseteq \uparrow (V)\subseteq V$ .  So, $U=\uparrow (E)$ and  $V=\uparrow (M)$. Now, $U\cap V=\uparrow (E) \cap \uparrow (M)=\bigcup_{e\in E,m\in M}{(\uparrow e \cap \uparrow m)}$. Therefore  $U\cap V$ is compact because a finite union of compact subsets is compact.

\noindent (B) Since \textit{X} is finitary, then \textit{X} is strongly compact. From Theorem 6.1, \textit{X} is compact. Since $"\preccurlyeq "$  is reflexive, then for all $x\in X, x\in \uparrow x$. Thus, we have $\uparrow x$ is finitary. Furthermore, for all $x\in X, \uparrow x$ is an upper set. Therefore, $\uparrow x \cap \uparrow y$ is compact for all $x, y\in X$.
\end{proof}

\textbf{Conclusion:} Theorems 3.1, 3.2, 4.1, 4.2, 5.1, 6.1, 6.2, 6.3, 6.4, Proposition 5.1, Lemma 5.1 and Corollary 6.1 can be obtained if we replace the condition of t-set by a pre-ordered set (resp. abstract base, continuous information system)

\end{document}